\documentclass[reqno]{amsart}

\usepackage[utf8]{inputenc}
\usepackage{amssymb}
\usepackage{amsthm}
\usepackage{amsmath}
\usepackage{tikz}

\usepackage[toc,page]{appendix}

\usepackage{enumitem}

\usepackage{tikz-cd}

\usepackage{tikz-cd}

\usepackage{tikz}
\usetikzlibrary{datavisualization}
\usetikzlibrary{datavisualization.formats.functions}

\usetikzlibrary{arrows}

%\usetikzlibrary{decorations.markings}
%\pgfarrowsdeclarecombine[-12*(0.3pt+0.25\pgflinewidth)-\pgflinewidth]{X}{X}{angle 90 reversed}{angle 90 reversed}{angle 90}{angle 90}

\usepackage{import}

\usepackage{mathtools}
\usepackage{mathrsfs}
\usepackage{commath}

\usepackage{framed}

\usetikzlibrary{calc}

\makeatletter
\pgfarrowsdeclare{X}{X}
{
  \pgfutil@tempdima=0.3pt%
  \advance\pgfutil@tempdima by.25\pgflinewidth%
  \pgfutil@tempdimb=5.5\pgfutil@tempdima\advance\pgfutil@tempdimb by.5\pgflinewidth%
  \pgfarrowsleftextend{+-\pgfutil@tempdimb}
  \pgfarrowsrightextend{0pt}
}
{
  \pgfutil@tempdima=0.3pt%
  \advance\pgfutil@tempdima by.25\pgflinewidth%
  \pgfsetdash{}{+0pt}
  \pgfsetroundcap
  \pgfsetmiterjoin
  \pgfpathmoveto{\pgfqpoint{-5.5\pgfutil@tempdima}{-6\pgfutil@tempdima}}
  \pgfpathlineto{\pgfqpoint{5.5\pgfutil@tempdima}{6\pgfutil@tempdima}}
  \pgfpathmoveto{\pgfqpoint{-5.5\pgfutil@tempdima}{6\pgfutil@tempdima}}
  \pgfpathlineto{\pgfqpoint{5.5\pgfutil@tempdima}{-6\pgfutil@tempdima}}
  \pgfusepathqstroke 
}
\makeatother

\newtheorem{theorem}{Theorem}[section]
\newtheorem{lemma}[theorem]{Lemma}
\newtheorem{proposition}[theorem]{Proposition}
\newtheorem{corollary}[theorem]{Corollary}
\newtheorem{definition}[theorem]{Definition}

\theoremstyle{remark}
\newtheorem{remark}[theorem]{Remark}

\usepackage{relsize}
\usepackage{exscale}

\usepackage{mathtools}
\usepackage{mathrsfs}

\usepackage{nicefrac}

\usepackage{framed}

\usepackage{datetime}
%\newdateformat{monthyear}{\monthname[\THEMONTH] \THEYEAR}

%\usepackage{natbib}
\usepackage{hyperref} %adds links to stuff, load last.

\renewcommand{\epsilon}{\varepsilon}

\DeclareMathOperator{\Orb}{Orb}

\title{On Inverse Shadowing} 
\author{Chris Good}
\author{Joel Mitchell}
\author{Joe Thomas}
\date{May 2019}

\begin{document}

\hypersetup{pageanchor=false} %removes some warnings

\subjclass[2000]{37B05, 37B10, 37B20, 54H20}
\keywords{inverse shadowing, minimality, sensitivity, equicontinuity}

\begin{abstract}
We give a reformulation of the inverse shadowing property with respect to the class of all pseudo-orbits. This reformulation bears witness to the fact that the property is far stronger than might initially seem. We give some implications of this reformulation, in particular showing that systems with inverse shadowing are not sensitive. Finally we show that, on compact spaces, inverse shadowing is equivalent to a finite version of it.
\end{abstract}

\maketitle

\hypersetup{pageanchor=true} %prevents the other one from breaking things
Let $f\colon X\to X$ be a continuous function on a (not necessarily compact) metric space $X$. A $\delta$\textit{-pseudo-orbit} is a sequence $(x_i)_{i\in \omega}\subseteq X$ such that $d(f(x_{i}),x_{i+1}) <\delta$ for all $i\in\omega$. Such sequences arise naturally during the computation of orbits of points in dynamical systems, and so their study is of great importance for numerics. Indeed, through calculation one often encounters rounding errors and so the generated sequence of points is not actually a true orbit of the system, but instead a $\delta$-pseudo-orbit with $\delta$ dependent upon the degree of accuracy to which one can compute. One question that may then be asked is, to what extent does this sequence reflect any of the original dynamics in the system? Ultimately, this is a question regarding the stability of a dynamical system and one line of enquiry is to determine if such sequences are closely followed by true orbits of the system; thus leading directly to the notion of shadowing. The sequence $(y_i)_{i\in\omega}\subseteq X$ is said to $\varepsilon$-shadow the sequence $(x_i)_{i\in\omega}$ provided that $d(y_i,x_i)<\varepsilon$ for all $i\in\omega$. If $(y_i)_{i\in\omega}=(f^i(y))_{i\in\omega}$ for some $y\in X$, then this shadowing means that to some degree of accuracy $\varepsilon>0$, the pseudo-orbit is followed by a true orbit. If this happens for all pseudo-orbits of some given accuracy, then the system $(X,f)$ is said to have the shadowing property.  %Thus, if a system has shadowing then computed orbits are followed by true trajectories. 

This concept of shadowing has a natural interpretation when modelling a system numerically and this has been studied in detail in the works of Corless \cite{Corless}, Palmer \cite{Palmer} and Pearson \cite{Pearson}. It is also an important theoretical concept. For example, Bowen \cite{bowen-markov-partitions} used shadowing implicitly as a key step in his proof that the nonwandering set of an Axiom A diffeomorphism is a factor of a shift of finite type. Since then it has been studied extensively as a key factor in stability theory \cite{Pilyugin, robinson-stability,WaltersP}, in understanding the structure of $\omega$-limit sets and Julia sets 
\cite{
barwell-davies-good,
BarwellGoodOprochaRaines, 
BarwellMeddaughRaines2015, 
BarwellRaines2015,
Bowen, 
MeddaughRaines}, and as a property in and of itself \cite{Coven, fernandez-good, GoodMeddaugh2018, LeeSakai, Nusse, Pennings, Pilyugin,Sakai2003}.

In this paper, we look at the related concept, inverse shadowing. First introduced by Corless and Pilyugin \cite{CorlessPilyugin} as something akin to the ``dual'' of shadowing, and as part of the concept of bishadowing by Diamond \textit{et al} \cite{DiamondKozyakinKloedenPokrovskii}, inverse shadowing in a system informally means that true trajectories may be recovered from computed orbits (within some given accuracy).  %Conversely, one may wonder under what circumstances it might be possible to recover true trajectories from computed orbits: it is this line of thinking which leads to \textit{inverse shadowing}.
%Inverse shadowing was first introduced by Corless and Pilyugin \cite{CorlessPilyugin} and as part of the concept of bishadowing by Diamond \textit{et al} \cite{DiamondKozyakinKloedenPokrovskii}. 
Kloeden, Ombach and Pokroskii \cite{KloedenOmbackPokrovskii} later defined inverse shadowing using the notion of a $\delta$-method which are functions mapping points to $\delta$-pseudo-orbits originating from the point. This allows one to consider certain classes of maps from the space to the space of pseudo-orbits through the imposition of extra structure on such mappings such as continuity.

 Such classes have been studied in a variety of different settings for example \cite{GoodMitchellThomas,Honary, KloedenOmbackPokrovskii,Lee, LeePark, Pilyugin2002}.  %see \cite{ChoiSungsookLee,DiamondLeeYinghao,Lee, Pilyugin2002} for more on this concept).
Of particular interest has been its relationship to structural stability. In \cite{Pilyugin2002}, Pilyugin showed that if an Axiom A diffeomorphism on a closed $C^\infty$ manifold is structurally stable, then it has the inverse shadowing property with respect to classes of continuous methods. Meanwhile in \cite{KloedenOmbach}, Kloeden and Ombach prove that a structurally stable homeomorphism on a compact space has inverse shadowing with respect to the class of methods induced by homeomorphism. Further results in this direction can be found in \cite{ChoiLeeZhang,Honary,Lee}. %Whilst clearly of theoretical importance, inverse shadowing has also been studied as a property in and of itself \cite{GoodMitchellThomas}. %Lee and Park \cite{LeePark} show that inverse shadowing with respect to various classes are equivalent to the classical shadowing property for homeomorphisms of the circle. In \cite{GoodMitchellThomas} we show that inverse shadowing is preserved under many induced dynamical systems including inverse limits, symmetric and Tychonoff products and the induced hyperspace of compact sets.  

Within this paper, we examine inverse shadowing with respect to the class of all pseudo-orbits. In Section \ref{SectionReform} we prove that inverse shadowing is equivalent to a property which essentially involves a quantifier swap in the definition. We use this reformulation to give a number of implications in Section \ref{SectionImplications}, in particular showing that a system exhibiting inverse shadowing is not \textit{eventually sensitive} (defined below). We conclude by showing that when the phase space is compact, for the classes $\mathcal{T}_0$, $\mathcal{T}_c$ and $\mathcal{T}_h$, inverse shadowing is equivalent to a finite version.

Throughout $(X,d)$ is a metric space: we emphasise that, unless otherwise stated, we do not assume $X$ to be compact. 
We denote by $\mathbb{Z}$ the set of all integers; the set of positive integers $1,2,3,4,\ldots$ is denoted by $\mathbb{N}$ whilst $\omega \coloneqq\mathbb{N}\cup\{0\}$.
\section{Preliminaries}

We firstly outline the notions of inverse shadowing that will be used here. Let $f \colon X \to X$ be a homeomorphism (resp. a continuous function). We call the pair $(X,f)$ a \textit{dynamical system}. The \textit{orbit} of $x$ under $f$ is the set of points $\{f^i(x)\}_{i\in A}$ and is denoted  by $\Orb_f(x)$, with the understanding that $A=\mathbb{Z}$ if $f$ is a homeomorphism and a full version of inverse shadowing is under consideration whilst $A=\omega$ when it is a positive version of inverse shadowing under consideration. For $\delta>0$, we refer to a bi-infinite sequence $(x_k)_{k\in \mathbb{Z}}$ such that $d(f(x_k), x_{k+1}) < \delta$ for all $k \in \mathbb{Z}$ as a \textit{$\delta$-pseudo-orbit}; and a mono-infinite sequence $(x_k)_{k\in \omega}$ such that $d(f(x_k), x_{k+1}) < \delta$ for all $k \in \omega$ as a \textit{positive $\delta$-pseudo-orbit}. 

Let $X^\mathbb{Z}$ (resp. $X^\omega$) be the product space of all bi-infinite (resp. mono-infinite) sequences, with the product topology (note that compactness of $X$ implies compactness of the product). Then for any given $\delta>0$, let $\Phi_f(\delta) \subseteq X^\mathbb{Z}$ be the set of all $\delta$-pseudo-orbits with respect to $f$ (resp. $\Phi ^+ _f(\delta) \subseteq X^\omega$ the set of all positive $\delta$-pseudo-orbits with respect to $f$). A mapping $\varphi \colon X \to \Phi_f(\delta)$ (resp. $\varphi\colon X\to\Phi_f^+(\delta)$), such that, for each $x \in X$, $\varphi(x)_0=x$, a (resp. positive) $\delta$-method for $f$, where $\varphi(x)_k$ is used to denote the $k$\textsuperscript{th} term in the sequence $\varphi(x)$. We denote by $\mathcal{T}_0(f, \delta)$ the set of all respective $\delta$-methods with understanding from the context whether this refers to positive methods or not. Similarly we denote by $\mathcal{T}_c(f,\delta)$ the set of all continuous (positive) $\delta$-methods , and by $\mathcal{T}_h(f,\delta)$ the set of all (positive) $\delta$-methods induced by a homeomorphism, i.e., $\mathcal{T}_h(f,\delta)$ is the set of (resp. positive) $\delta$-methods $\varphi$ for which there exists a homeomorphism $h \colon X \to X$ such that $d(f(x),h(x))< \delta$ for each $x \in X$ and $\varphi(x)_k=h^k(x)$, for all relevant $k \in \mathbb{Z}$ (resp. $k \in \omega$).

\begin{definition}
Let $(X,d)$ be a metric space and let $f \colon X \to X$ be a homeomorphism (resp. continuous function). We say that $f$ experiences \textit{(resp. positive) $\mathcal{T}_\alpha$-inverse shadowing}
%\footnote{For positive $\mathcal{T}_\alpha$-inverse shadowing, the class $\mathcal{T}_\alpha$ is considered the collection of all relevant \textit{forward} $\delta$-methods. For example, if $\varphi \in \mathcal{T}_0$ then $\varphi(y)$ is a positive pseudo orbit starting from $y$.}
if, for any $\epsilon>0$ there exists $\delta>0$ such that for any $x \in X$ and any $\varphi \in \mathcal{T}_\alpha(f,\delta)$ there exists $y \in X$ such that $\varphi(y)$ $\epsilon$-shadows $x$; i.e.
\[ \forall k \in \mathbb{Z} \,( \textit{resp. } k \in \omega), \,d(\varphi(y)_k, f^k(x))< \epsilon.\]
\end{definition}

\begin{definition}
Let $f \colon X \to X$ be a homeomorphism (resp. continuous function) on a metric space $X$. We say that $f$ experiences \textit{(resp. positive) weak inverse shadowing} with respect to the class $\mathcal{T}_\alpha$ if, for any $\epsilon>0$ there exists $\delta>0$ such that for any $x \in X$ and any $\varphi \in \mathcal{T}_\alpha(f,\delta)$ there exists $y \in X$ such that 
\[\varphi(y) \subseteq B_\epsilon\left(\Orb_f(x)\right).\]
(NB. As stated in the preliminaries, $\Orb_f(x)$ is only the positive trajectory of $x$ when considering the `positive' version of the above statement). 
\end{definition}

\begin{remark}
Clearly if a system has $\mathcal{T}_\alpha$-inverse shadowing then it has weak inverse shadowing with respect to the class $\mathcal{T}_\alpha$.
\end{remark}

\section{An Equivalent Reformulation of Inverse Shadowing with respect to the class of all pseudo-orbits}\label{SectionReform}
Within the literature surrounding inverse shadowing, several authors \cite{CorlessPilyugin, Lee} have commented on the importance of restricting one's attention to certain admissible classes of pseudo-orbits. Theorem \ref{Reform} brings to light exactly why such a restriction may be important; in particular, it demonstrates that $\mathcal{T}_0$-inverse shadowing is a much stronger property than it may appear \textit{prima facie}.

\begin{theorem}\label{Reform}
A system $(X,f)$ with $f$ a homeomorphism (resp. continuous map) has (resp. positive) $\mathcal{T}_0$-inverse shadowing if and only if for any $\epsilon>0$ there exists $\delta>0$ such that for any $x \in X$ there exists $y \in X$ such that for any $\varphi \in \mathcal{T}_0(f,\delta)$ and any $k \in \mathbb{Z}$ (resp. $k \in \omega$) $d(f^k(x),\varphi(y)_k)<\epsilon$.
\end{theorem}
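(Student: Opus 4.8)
The plan is to prove the two implications separately, with almost all of the (rather light) content residing in the forward direction; the reverse implication is just a weakening of quantifiers. For the reverse direction, assume the reformulated property and, given $\epsilon>0$, take the $\delta>0$ it furnishes. For each $x\in X$ the reformulation produces a single $y\in X$ for which $d(f^k(x),\varphi(y)_k)<\epsilon$ holds simultaneously for \emph{every} $\varphi\in\mathcal{T}_0(f,\delta)$ and every relevant $k$. In particular it holds for any one prescribed method $\varphi$, which is exactly what $\mathcal{T}_0$-inverse shadowing demands, and with the very same $\delta$; no constants need adjusting.

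For the forward implication I would argue by contradiction, exploiting the defining feature of $\mathcal{T}_0$: it carries no regularity constraint, so a $\delta$-method may be assembled pointwise. Precisely, if to each $y\in X$ one associates an arbitrary (positive) $\delta$-pseudo-orbit $p_y$ with $(p_y)_0=y$, then the assignment $y\mapsto p_y$ is automatically a member of $\mathcal{T}_0(f,\delta)$, since the two requirements on a $\delta$-method — that $\varphi(y)_0=y$ and that $\varphi(y)$ be a $\delta$-pseudo-orbit — are imposed separately at each point. This \emph{gluing} observation is the crux of the argument, and it is exactly what is unavailable for $\mathcal{T}_c$ or $\mathcal{T}_h$, where piecing together prescriptions from different points need not respect continuity or arise from a single homeomorphism.

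Carrying this out: suppose $(X,f)$ has $\mathcal{T}_0$-inverse shadowing, fix $\epsilon>0$, let $\delta>0$ be the associated constant, and fix $x\in X$. Suppose for contradiction that no single $y$ witnesses the reformulated conclusion for this $x$. Negating that conclusion gives, for every $y\in X$, some $\varphi\in\mathcal{T}_0(f,\delta)$ and some index $k$ with $d(f^k(x),\varphi(y)_k)\ge\epsilon$; retaining only the value at $y$ yields for each $y$ a $\delta$-pseudo-orbit $p_y:=\varphi(y)$ starting at $y$ that fails to $\epsilon$-shadow $x$. Using a choice function to select one such $p_y$ for each $y$, assemble the single method $\psi\in\mathcal{T}_0(f,\delta)$ defined by $\psi(y)=p_y$. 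Applying $\mathcal{T}_0$-inverse shadowing to $x$ and $\psi$ produces $y^\ast\in X$ with $\psi(y^\ast)$ $\epsilon$-shadowing $x$; but $\psi(y^\ast)=p_{y^\ast}$, which by construction fails to $\epsilon$-shadow $x$ — a contradiction. Hence the required $y$ exists for every $x$, establishing the reformulation.

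The sole genuine idea here is the gluing step; once it is in place the remainder is formal, so I do not expect a substantial obstacle. I would take care to phrase the argument uniformly across all four cases (homeomorphism versus continuous map, full versus positive), observing that it uses nothing about the methods beyond the index set being $\mathbb{Z}$ or $\omega$ and never invokes any regularity, so a single set of lines covers every case. I would also flag explicitly the mild appeal to the axiom of choice when forming $\psi$.
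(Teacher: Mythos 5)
Your proposal is correct and follows essentially the same route as the paper: the crux in both is the ``gluing'' (diagonal) construction $\psi(y)=\varphi_y(y)$, which assembles the pointwise-chosen bad pseudo-orbits into a single member of $\mathcal{T}_0(f,\delta)$ and then contradicts inverse shadowing. Your per-point contradiction (fixing $\epsilon$, $\delta$, and $x$ before negating) is a minor, harmless reorganization of the paper's negation of the full statement.
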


\begin{proof}
Clearly the latter implies the former. Thus, suppose that $f$ has (resp. positive) $\mathcal{T}_0$-inverse shadowing and assume further that the latter is false, that is, there exists an $\varepsilon>0$ such that
\begin{align}\label{eqnIS0.1}
\forall \delta>0 \, \exists x \in X : \forall y \in X \, \exists\varphi \in \mathcal{T}_0(f,\delta) \, \exists k \in \mathbb{Z} \, (\text{resp. } k \in \omega) : d(f^k(x), \varphi(y)_k)\geq \epsilon.
\end{align}

Take such an $\epsilon$ and let $\delta>0$ correspond to this $\epsilon$ in the definition of inverse shadowing. Now, fix $x \in X$ corresponding to this $\epsilon$ and $\delta$ as in (\ref{eqnIS0.1}). Then by (\ref{eqnIS0.1}), for each $y \in X$, there will exist a $\varphi_y \in \mathcal{T}_0(f,\delta)$ and a $k\in\mathbb{Z}$ (resp. $k\in\omega$) for which $d(f^k(x), \varphi_y(y)_k)\geq \epsilon$. Now, define a map $\varphi \colon X \to X^\mathbb{Z} \colon y \mapsto \varphi_y(y)$. By construction, $\varphi \in \mathcal{T}_0(f,\delta)$ and by (resp. positive) $\mathcal{T}_0$-inverse shadowing, there will exist $y \in X$ such that $d(f^k(x),\varphi(y)_k)<\epsilon$ for all $k \in \mathbb{Z}$ (resp. $k\in \omega)$. But by the construction of $\varphi$, this can not be the case for all such $k$ and thus one obtains a contradiction.
\end{proof}

When $X$ is compact we can use the uniform continuity of $f$ to obtain the following corollary.

\begin{corollary}\label{ReformCtd} Let $X$ be a compact metric space. A system $(X,f)$ with $f$ a homeomorphism (resp. continuous map) has (resp. positive) $\mathcal{T}_0$-inverse shadowing if and only if for any $\epsilon>0$ there exists $\delta>0$ such that for any $x \in X$ there exists $y \in X$ such that for any $\varphi \in \mathcal{T}_0(f,\delta)$ and any $k \in \mathbb{Z}$ (resp. $k \in \omega$)  and any $z \in B_\delta(y)$, $d(f^k(x),\varphi(z)_k)<\epsilon$.
\end{corollary}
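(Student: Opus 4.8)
The plan is to deduce the corollary from Theorem~\ref{Reform} rather than reprove it from scratch. The backward implication is immediate: the displayed condition, specialised to $z=y$ (which lies in $B_\delta(y)$ since $d(y,y)=0<\delta$), reads ``for any $\epsilon>0$ there is $\delta>0$ so that for any $x$ there is $y$ with $d(f^k(x),\varphi(y)_k)<\epsilon$ for all $\varphi\in\mathcal{T}_0(f,\delta)$ and all $k$,'' which is exactly the reformulation of Theorem~\ref{Reform} and hence equivalent to $\mathcal{T}_0$-inverse shadowing. So the content lies in the forward implication: assuming inverse shadowing, I must upgrade the reformulation so that the same witness $y$ tolerates a $\delta$-perturbation of the base point.

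First I would fix $\epsilon>0$ and apply Theorem~\ref{Reform} with $\epsilon/2$ in place of $\epsilon$ to obtain a corresponding $\delta_0>0$. Using uniform continuity of $f$ on the compact space $X$, I would then pick $\delta_1\in(0,\delta_0/2)$ with $d(a,b)<\delta_1\implies d(f(a),f(b))<\delta_0/2$, and set $\delta=\min\{\delta_1,\epsilon/2\}$. Given $x\in X$, let $y$ be the point supplied by Theorem~\ref{Reform} for this $x$; note that $y$ depends only on $x$ and works \emph{simultaneously} for every method in $\mathcal{T}_0(f,\delta_0)$, which is the crucial feature.

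Now take any $\varphi\in\mathcal{T}_0(f,\delta)$ and any $z\in B_\delta(y)$. The key step is a grafting construction: define a new method $\psi$ by $\psi(w)=\varphi(w)$ for $w\neq y$, and $\psi(y)_0=y$, $\psi(y)_k=\varphi(z)_k$ for $k\neq0$. Away from the indices $0$ and (in the homeomorphism case) $-1$, the transitions of $\psi(y)$ are inherited from the $\delta$-pseudo-orbit $\varphi(z)$, hence bounded by $\delta<\delta_0$; at the junction $0\to1$ one estimates $d(f(y),\varphi(z)_1)\le d(f(y),f(z))+d(f(z),\varphi(z)_1)<\delta_0/2+\delta<\delta_0$ using $d(y,z)<\delta_1$, and in the homeomorphism case at $-1\to0$ one has $d(f(\varphi(z)_{-1}),y)\le d(f(\varphi(z)_{-1}),z)+d(z,y)<2\delta<\delta_0$. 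Thus $\psi\in\mathcal{T}_0(f,\delta_0)$. Applying the defining property of $y$ to $\psi$ yields $d(f^k(x),\psi(y)_k)<\epsilon/2$ for all $k$: for $k\neq0$ this is exactly $d(f^k(x),\varphi(z)_k)<\epsilon/2<\epsilon$, while for $k=0$ it gives $d(x,y)<\epsilon/2$, whence $d(x,z)\le d(x,y)+d(y,z)<\epsilon/2+\delta\le\epsilon$, as required.

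The main obstacle is not any single estimate but the quantifier bookkeeping around the grafting: the witness $y$ must be chosen before $\varphi$ and $z$ are revealed, so one cannot simply apply the reformulation to the perturbed base point $z$. This is precisely why the construction reroutes the pseudo-orbit emanating from $z$ through $y$ at time $0$, and why one needs the reformulation's $y$ to be good for \emph{all} methods at once; uniform continuity enters only to guarantee that this rerouting inflates the pseudo-orbit constant from $\delta$ to no more than $\delta_0$.
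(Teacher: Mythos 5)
Your proof is correct and follows essentially the same route as the paper: both hinge on splicing the witness $y$ from Theorem~\ref{Reform} into the pseudo-orbit emanating from the nearby point $z$, using uniform continuity of $f$ to keep the junction within the larger pseudo-orbit constant. The only cosmetic difference is that you package this splice as an explicit ``grafted'' method $\psi\in\mathcal{T}_0(f,\delta_0)$, whereas the paper works directly with the modified pseudo-orbit $(\ldots,z_{-1},y,z_1,\ldots)$; the estimates and quantifier structure are the same.
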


\begin{proof}
That the latter entails the former is trivial. Therefore, suppose the former. Let $\epsilon>0$ be given. Then, by Theorem \ref{Reform} there exists $\eta>0$ (without loss of generality $\eta<\frac{\varepsilon}{2}$) such that for any $x \in X$, there exists $y \in X$ such that for any $\varphi \in \mathcal{T}_0(f,\eta)$ and any $k \in \mathbb{Z}$ (resp. $k \in \omega$) $d(f^k(x),\varphi(y)_k)<\frac{\epsilon}{2}$.

First consider the case when $f$ is a homeomorphism. Using uniform continuity, take $\delta>0$ such that, for any $a,b \in X$, if $d(a,b)<\delta$ then $d(f(a),f(b)) < \frac{\eta}{2}$ and $d(f^{-1}(a),f^{-1}(b))< \frac{\eta}{2}$; without loss of generality $\delta<\frac{\eta}{2}$. Now let $(z_k)_{k \in \mathbb{Z}}$ be a $\delta$-pseudo-orbit where $z_0 \in B_\delta(y)$. Then, by the triangle inequality, \[(\ldots, z_{-k},\ldots, z_{-1}, y, z_1, \ldots, z_k, \ldots)\] is a $\eta$-pseudo-orbit through $y$. It follows by the choice of $y$ that $d(f^k(x),z_k)<\frac{\epsilon}{2}$ for all $k \in \mathbb{Z}\setminus\{0\}$. Since $d(x,z_0) \leq d(x,y)+d(y, z_0)=\frac{\epsilon}{2}+\eta<\epsilon$, the sequence $(z_k)_{k \in \mathbb{Z}}$ $\epsilon$-shadows $x$ and we are done.

Now suppose that $f$ is a continuous map; we are now considering positive $\mathcal{T}_0$-inverse shadowing. We argue similarly to before. Using uniform continuity, take $\delta>0$ such that, for any $a,b \in X$, if $d(a,b)<\delta$ then $d(f(a),f(b)) < \frac{\eta}{2}$. Without loss of generality $\delta<\frac{\eta}{2}$. Now let $(z_k)_{k \in \omega}$ be a $\delta$-pseudo-orbit where $z_0 \in B_\delta(y)$. Then, by the triangle inequality, $(y, z_1, \ldots, z_k, \ldots)$ is a $\eta$-pseudo-orbit through $y$. It follows by the choice of $y$ that $d(f^k(x),z_k)<\frac{\epsilon}{2}$ for all $k \in \omega\setminus\{0\}$. Since \[d(x,z_0) \leq d(x,y)+d(y, z_0)<\frac{\epsilon}{2}+\delta<\frac{\epsilon}{2}+\eta<\epsilon,\] the sequence $(z_k)_{k \in \omega}$ $\epsilon$-shadows $x$ and we are done.
\end{proof}

%In particular, Corollary \ref{ReformCtd} means that given any point and any $\epsilon>0$ there is an open set which $\epsilon$-shadows that point.
Using the same technique as in the proof of Theorem \ref{Reform} we come by the following reformulation of weak inverse shadowing. We omit the proof.

\begin{theorem}\label{ReformWeak}
A map $f\colon X \to X$ has (resp. positive) weak inverse shadowing with respect to the class  $\mathcal{T}_0$ if and only if for any $\epsilon>0$ there exists $\delta>0$ such that for any $x \in X$ there exists $y \in X$ such that for any $\varphi \in \mathcal{T}_0(f,\delta)$, \[\varphi(y) \subseteq B_\epsilon\left(\Orb_f(x)\right).\]
\end{theorem}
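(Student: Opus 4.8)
The plan is to mimic the quantifier-swap argument used for Theorem \ref{Reform} essentially verbatim, replacing the shadowing condition $d(f^k(x),\varphi(y)_k)<\epsilon$ throughout by the inclusion $\varphi(y)\subseteq B_\epsilon(\Orb_f(x))$. The backward implication is immediate: the reformulated statement merely hoists the choice of $y$ in front of the universal quantifier over $\varphi$, so any $y$ witnessing the reformulation witnesses weak inverse shadowing for each individual $\varphi\in\mathcal{T}_0(f,\delta)$. The content therefore lies entirely in the forward implication, which I would prove by contradiction.

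For the forward direction I would assume $f$ has (resp. positive) weak inverse shadowing with respect to $\mathcal{T}_0$ and suppose for contradiction that the reformulation fails. Negating the reformulated statement yields an $\epsilon>0$ such that for every $\delta>0$ there exists $x\in X$ with the property that for every $y\in X$ one can find some $\varphi\in\mathcal{T}_0(f,\delta)$ with $\varphi(y)\not\subseteq B_\epsilon(\Orb_f(x))$. Applying weak inverse shadowing to this $\epsilon$ produces a corresponding $\delta>0$, and feeding this $\delta$ into the negated statement produces a single ``bad'' point $x\in X$.

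The key step is then the diagonal construction, exactly as in Theorem \ref{Reform}. For this fixed $x$ and $\delta$, the negation supplies for each $y\in X$ a method $\varphi_y\in\mathcal{T}_0(f,\delta)$ whose image $\varphi_y(y)$ escapes $B_\epsilon(\Orb_f(x))$. I would define a single map $\varphi\colon X\to X^{\mathbb{Z}}$ (resp. $\varphi\colon X\to X^\omega$) by $y\mapsto\varphi_y(y)$. Weak inverse shadowing applied to $x$ then yields a point $y$ with $\varphi(y)\subseteq B_\epsilon(\Orb_f(x))$, contradicting $\varphi(y)=\varphi_y(y)\not\subseteq B_\epsilon(\Orb_f(x))$.

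The one point requiring verification — and which I expect to be the only genuine, if mild, obstacle — is that the diagonal $\varphi$ is a legitimate member of $\mathcal{T}_0(f,\delta)$. This amounts to checking both that $\varphi(y)$ is a $\delta$-pseudo-orbit and that $\varphi(y)_0=y$; both hold because each $\varphi_y$ is itself a $\delta$-method, so $\varphi_y(y)\in\Phi_f(\delta)$ and $\varphi_y(y)_0=y$. Crucially, $\mathcal{T}_0$ imposes no continuity or homeomorphism constraint, so the pointwise (and possibly wildly discontinuous) diagonal assignment is admissible; this is precisely why the reformulation is available for $\mathcal{T}_0$ but not obviously for $\mathcal{T}_c$ or $\mathcal{T}_h$.
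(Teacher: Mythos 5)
Your proof is correct and is exactly the argument the paper intends: the paper omits the proof of Theorem \ref{ReformWeak}, stating only that it follows by the same technique as Theorem \ref{Reform}, and your diagonal construction $y \mapsto \varphi_y(y)$ (together with the observation that $\mathcal{T}_0$ imposes no continuity constraint, so the diagonal map is an admissible $\delta$-method) is precisely that technique transplanted to the weak inverse shadowing condition.
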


\section{Implications}\label{SectionImplications}

Before giving some implications of Theorem \ref{Reform}, we will need to recall some standard definitions in dynamical systems. 
A dynamical system $(X,f)$ is said to be \textit{equicontinuous} at a point $x \in X$ if for any $\epsilon>0$ there exists $\delta>0$ such that, for any $y \in X$, if $d(x,y)<\delta$ then, for any $n \in \omega$, $d(f^n(x),f^n(y))<\epsilon$. The system itself is said to be \textit{equicontinuous} if it is equicontinuous at every point. We observe that, when $X$ is compact, equicontinuity is equivalent to \textit{uniform equicontinuity} ($\delta$ is chosen independently of the point $x\in X$). A dynamical system exhibits \textit{sensitive dependence on initial conditions} (or is \textit{sensitive}) if there exists $\delta>0$ such that for any nonempty open set $U$ there exist $x,y \in U$ and $k \in \mathbb{N}$ such that $d(f^k(x),f^k(y))\geq \delta$. Such a $\delta$ is referred to as a \textit{sensitivity constant} for $(X,f)$. A weakening of sensitivity was introduced in \cite{GoodLeekMitchell}; a dynamical system $(X,f)$ is \textit{eventually sensitive} if there exists $\delta>0$ such that for any $x \in X$ and any $\epsilon>0$ there exist $n ,k \in \omega$ and $y \in B_\epsilon(f^n(x))$ such that $d(f^{n+k}(x), f^k(y)) \geq \delta$. We refer to such a $\delta$ as an \textit{eventual-sensitivity constant}. Clearly sensitivity implies eventual sensitivity but, as demonstrated in \cite{GoodLeekMitchell}, the converse is not true. It is also easy to see that neither sensitivity nor eventual sensitivity can be held in conjunction with equicontinuity. Finally a system is \textit{expansive} if there exists $\delta>0$ such that for any $x,y \in X$ there exists $k \in \omega$ with $d(f^k(x),f^k(y))\geq \delta$. It is an easy exercise to show that if a system is \textit{perfect}, that is the space has no isolated points, then an expansive system is sensitive. 

In \cite[Theorem 4]{Honary} the authors show a homeomorphism with $\mathcal{T}_0$-inverse shadowing is not expansive. Whilst not explicitly stated there, we remark that this result assumes that for any $\epsilon$ there exist distinct points $x,y \in X$ with $d(x,y) < \epsilon$. Indeed, it is easy to see that a system consisting of a single periodic orbit has $\mathcal{T}_0$-inverse shadowing but is also expansive. The reformulation given by Theorem \ref{Reform} enables us to give the following much stronger result.

\begin{theorem}\label{thmInvShadNotEvSens}
Let $X$ be a metric space and $f\colon X \to X$ a continuous map. If $f$ has the positive $\mathcal{T}_0$-inverse shadowing property then the system $(X,f)$ is not eventually sensitive. (If $f$ is a homeomorphism with $\mathcal{T}_0$-inverse shadowing, then neither $(X,f)$ nor the inverse system $(X,f^{-1})$ are eventually sensitive.)
\end{theorem}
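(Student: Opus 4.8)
The plan is to argue by contraposition, manufacturing from eventual sensitivity an adversarial $\delta$-method that violates the reformulated property of Theorem \ref{Reform}. What makes this feasible is precisely the quantifier swap in that reformulation: it yields a \emph{single} point $y$ whose image under \emph{every} $\delta$-method must $\epsilon$-shadow the orbit of $x$. So suppose $f$ has positive $\mathcal{T}_0$-inverse shadowing but, for contradiction, $(X,f)$ is eventually sensitive with constant $\delta_0>0$. I would fix $\epsilon$ with $0<\epsilon<\delta_0/2$, let $\delta>0$ be the value Theorem \ref{Reform} assigns to $\epsilon$, pick any $x\in X$, and take the corresponding $y\in X$, so that $d(f^k(x),\varphi(y)_k)<\epsilon$ for all $\varphi\in\mathcal{T}_0(f,\delta)$ and all $k\in\omega$. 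Feeding in the trivial method $z\mapsto(f^k(z))_{k\in\omega}$ (a genuine orbit, hence a member of $\mathcal{T}_0(f,\delta)$) shows in particular that $d(f^k(x),f^k(y))<\epsilon$ for every $k$: the true orbit of $y$ already $\epsilon$-shadows that of $x$.

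The decisive step is to exhibit a second method that drives $\varphi(y)$ a distance $\delta_0$ off the orbit of $x$. Applying eventual sensitivity to $y$ with a tolerance $\epsilon'<\delta$ produces $n,k\in\omega$ and $w\in B_{\epsilon'}(f^n(y))$ with $d(f^{n+k}(y),f^k(w))\ge\delta_0$. I would then define $\varphi$ to send each $z\ne y$ to its true orbit and set
\[
\varphi(y)_m=\begin{cases} f^m(y), & 0\le m\le n-1,\\ f^{\,m-n}(w), & m\ge n,\end{cases}
\]
so that $\varphi(y)$ tracks $y$ until time $n$ and then follows $w$. Every consecutive pair is a true $f$-step except at index $n-1$, where the error is $d(f^n(y),w)<\epsilon'<\delta$; thus $\varphi(y)$ is an honest $\delta$-pseudo-orbit based at $y$ and $\varphi\in\mathcal{T}_0(f,\delta)$. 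Since $\varphi(y)_{n+k}=f^k(w)$, the reformulation forces $d(f^{n+k}(x),f^k(w))<\epsilon$; combined with $d(f^{n+k}(x),f^{n+k}(y))<\epsilon$ from the previous paragraph, the triangle inequality gives $d(f^{n+k}(y),f^k(w))<2\epsilon<\delta_0$, contradicting the choice of $w$. (The lone degenerate case $n=0$ would overwrite $\varphi(y)_0=y$; I would handle it by first shrinking $\epsilon'$ below the modulus of continuity of $f$ at the single point $y$ and inserting the switch one step later, which is permissible on a non-compact space since only continuity at $y$ is used.)

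For the homeomorphism statement I would reuse this construction verbatim on the forward coordinates, padding $\varphi(y)$ with the true orbit of $y$ on the negative indices and invoking the bi-infinite form of Theorem \ref{Reform}; this shows $(X,f)$ is not eventually sensitive. For the inverse system I would run the mirror image, placing the perturbation on the \emph{negative} side: with $w\in B_{\epsilon'}(f^{-n}(y))$ and $d(f^{-(n+k)}(y),f^{-k}(w))\ge\delta_0$ furnished by eventual sensitivity of $f^{-1}$, set $\varphi(y)_m=f^m(y)$ for $m\ge -n$ and $\varphi(y)_m=f^{\,m+n}(w)$ for $m\le -n-1$. The only non-orbit step now sits at index $-n-1$, where, crucially, $d\bigl(f(\varphi(y)_{-n-1}),\varphi(y)_{-n}\bigr)=d\bigl(f(f^{-1}(w)),f^{-n}(y)\bigr)=d(w,f^{-n}(y))<\epsilon'$. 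Comparing $\varphi(y)_{-(n+k)}=f^{-k}(w)$ with the shadowing true orbit of $y$ then yields the same contradiction.

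The main obstacle, and the reason the inverse-system case is not a formality, is that on a non-compact space one cannot freely convert $\delta$-pseudo-orbits of $f$ into pseudo-orbits of $f^{-1}$, because $f^{-1}$ may fail to be uniformly continuous. The construction above dodges this completely: since $f\circ f^{-1}=\mathrm{id}$ holds exactly, the junction error collapses to the bare distance $d(w,f^{-n}(y))$, with no modulus of continuity involved. Thus I expect the genuine work to lie not in the logic — Theorem \ref{Reform} has already absorbed the quantifier manipulation — but in engineering each adversarial method to be a bona fide $\delta$-pseudo-orbit through $y$ while positioning the eventual-sensitivity gap at a coordinate where both the true and the perturbed orbit remain under control.
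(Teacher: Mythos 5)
Your proof is correct and follows essentially the same route as the paper: use the quantifier-swapped reformulation of Theorem \ref{Reform} to fix a point $y$ all of whose $\delta$-pseudo-orbits must $\epsilon$-shadow $\Orb_f(x)$, splice the orbit of an eventual-sensitivity witness onto the true orbit of $y$ to build an adversarial pseudo-orbit, and contradict $2\epsilon<\delta_0$ via the triangle inequality. The only difference is cosmetic: the paper applies eventual sensitivity at the point $f(y)$ (with tolerance $\delta$), which places the perturbation at index $n+1\geq 1$ and so automatically avoids the degenerate $n=0$ case that you handle separately using continuity of $f$ at $y$.
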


\begin{proof}
Suppose that $f$ is eventually sensitive and let $\delta_0>0$ be a constant of eventual sensitivity. Pick $\epsilon>0$ with $\epsilon< \frac{\delta_0}{2}$. Now take a $\delta>0$ corresponding to this $\epsilon$ as in the reformulation of positive inverse shadowing in Theorem \ref{Reform}. Pick $x \in X$. Then there exists $y \in X$ such that every $\delta$-pseudo-orbit through $y$ $\epsilon$-shadows $x$. Consider $f(y)$: By eventual sensitivity there exist $n,k \in \mathbb{N}$ and $z \in B_\delta\left(f^{n+1}(y)\right)$ such that $d(f^k(z), f^{n+k+1}(y))\geq \delta_0$. But the sequence 
\[(y,f(y),\ldots, f^{n}(y), z, f(z), \ldots f^k(z), f^{k+1}(z), \ldots),\]
is a $\delta$-pseudo orbit from $y$. Similarly so is the orbit sequence of $y$. Therefore, by inverse shadowing, $f^{n+k+1}(y),f^k(z) \in B_\epsilon\left(f^{n+k+1}(x)\right)$. It follows by the triangle inequality that
\[d\left(f^{n+k+1}(y),f^k(z)\right) < 2\epsilon < \delta_0,\]
which is a contradiction. Therefore $f$ is not eventually sensitive.
\end{proof}

\begin{remark}
Note that there is no need to make the assumption that  for any $\epsilon>0$ there are points $\epsilon$-close in Theorem \ref{thmInvShadNotEvSens}.
\end{remark}

In \cite[Theorem 3]{Honary} the authors show that a chain transitive homeomorphism on a compact metric space is minimal if and only if it has the weak inverse shadowing property with respect to $\mathcal{T}_0$. With Theorem \ref{ReformWeak} this result, as well as the following analogous one, becomes much more elementary. (NB. In the statement of \cite[Theorem 3]{Honary} the authors assume the phase space is compact, this assumption does not appear necessary for their proof nor ours.) Recall first that a system $(X,f)$ is said to be minimal if, for $A \subseteq X$ closed, $f(A)=A$ implies that $A=X$ or $A=\emptyset$. Equivalently it is minimal if $\overline{\Orb_f(x)}=X$ for all $x \in X$.

\begin{theorem}[see \cite{Honary}]\label{thmInvShadTTEq}
Let $X$ be a metric space and $f \colon X \to X$ be a chain transitive continuous function. Then $f$ is minimal if and only if $f$ has positive weak inverse shadowing with respect to the class $\mathcal{T}_0$.
\end{theorem}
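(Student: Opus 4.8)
The plan is to lean on the reformulation of weak inverse shadowing provided by Theorem \ref{ReformWeak}, which replaces the hard-to-control quantifier over methods by a single well-chosen point $y$; together with the density of orbits under minimality and the routing power of chain transitivity, the equivalence then falls out quickly.

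For the forward implication, minimality $\Rightarrow$ positive weak inverse shadowing, I would observe that this direction requires neither chain transitivity nor any real work. If $f$ is minimal then $\overline{\Orb_f(x)}=X$ for every $x$, so the forward orbit of $x$ is dense and therefore $B_\epsilon(\Orb_f(x))=X$ for every $\epsilon>0$. Hence, taking $\delta$ arbitrary and $y=x$, every $\varphi\in\mathcal T_0(f,\delta)$ trivially satisfies $\varphi(y)\subseteq X=B_\epsilon(\Orb_f(x))$, so the condition of Theorem \ref{ReformWeak} holds.

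For the converse I would argue by contraposition, constructing a method that defeats weak inverse shadowing whenever $f$ fails to be minimal. Non-minimality yields a point $x_0$ whose forward orbit is not dense, so there are a point $p$ and a constant $\epsilon_0>0$ with $d(p,\Orb_f(x_0))\geq d(p,\overline{\Orb_f(x_0)})=\epsilon_0$. Suppose, for contradiction, that positive weak inverse shadowing holds; applying Theorem \ref{ReformWeak} with $\epsilon=\epsilon_0/2$ produces a $\delta>0$ and, for $x_0$, a point $y$ with $\varphi(y)\subseteq B_{\epsilon_0/2}(\Orb_f(x_0))$ for \emph{every} $\varphi\in\mathcal T_0(f,\delta)$. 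Here chain transitivity enters: take a finite $\delta$-chain $y=w_0,\dots,w_n=p$ from $y$ to $p$ and splice the genuine orbit of $p$ onto its tail, yielding the positive $\delta$-pseudo-orbit $(w_0,\dots,w_n,f(p),f^2(p),\dots)$ starting at $y$. Since $\mathcal T_0$ imposes no continuity constraint, I can realise this sequence as $\varphi(y)$ for a legitimate $\delta$-method $\varphi$ by defining $\varphi(z)$ to be the true orbit of $z$ for all $z\neq y$. As $p$ lies on $\varphi(y)$ while $\varphi(y)\subseteq B_{\epsilon_0/2}(\Orb_f(x_0))$, we obtain $d(p,\Orb_f(x_0))<\epsilon_0/2<\epsilon_0$, a contradiction.

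The conceptual crux, and the only place chain transitivity is used, is the realisation that it lets us route a $\delta$-pseudo-orbit from the prescribed point $y$ to any prescribed target $p$; the freedom of the class $\mathcal T_0$ then upgrades this single pseudo-orbit into an admissible method. The only points needing genuine (but routine) verification are that the spliced sequence really is a positive $\delta$-pseudo-orbit — the join at $p$ contributes zero error — and that extending $\varphi$ by true orbits elsewhere keeps $\varphi\in\mathcal T_0(f,\delta)$. I do not anticipate a serious obstacle; indeed the absence of one is precisely the moral of Theorem \ref{ReformWeak}, which makes this equivalence almost immediate.
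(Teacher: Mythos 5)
Your proof is correct and follows essentially the same route as the paper: the forward direction is trivial from density of orbits, and the converse combines Theorem \ref{ReformWeak} with chain transitivity to route a $\delta$-pseudo-orbit from the distinguished point $y$ to an arbitrary target, exploiting the freedom of $\mathcal{T}_0$ to realise it as a method. The only difference is that you argue by contradiction where the paper argues directly that every orbit is $\epsilon$-dense, and your write-up usefully spells out the splicing and method-construction details that the paper leaves implicit.
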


\begin{proof}
Since the orbit of every point is dense under a minimal map, it is trivial that such a map has positive weak inverse shadowing with respect to the class $\mathcal{T}_0$.

Pick $z \in X$. Let $\epsilon>0$ and take $\delta>0$ as in Theorem \ref{ReformWeak}. Then there exists $y \in X$ such that for any $\varphi \in \mathcal{T}_0(f,\delta)$, \[\varphi(y) \subseteq B_\epsilon\left(\Orb(z)\right).\]
Since $f$ is chain transitive there is a $\delta$-chain from $y$ to every point in $X$. It follows then by the above that $x \in B_\epsilon\left(\Orb(z)\right)$ for all $x \in X$. Since $\epsilon>0$ was arbitrary the result follows. 
\end{proof}

\begin{corollary}
Let $X$ be a metric space and let $f \colon X \to X$ be a chain transitive continuous function. If $f$ has the positive $\mathcal{T}_0$-inverse shadowing property then the system $(X,f)$ is equicontinuous. (If $f$ is a homeomorphism with $\mathcal{T}_0$-inverse shadowing, then both $(X,f)$ and the inverse system $(X,f^{-1})$ are equicontinuous.)
\end{corollary}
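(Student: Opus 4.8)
The plan is to feed the two main results of this section into the standard dichotomy between equicontinuity and sensitivity, but to do so in a self-contained way using only what has been proved above. I would first dispose of the positive case. Since positive $\mathcal{T}_0$-inverse shadowing implies positive weak inverse shadowing with respect to $\mathcal{T}_0$ (the Remark after the definitions) and $f$ is chain transitive, Theorem \ref{thmInvShadTTEq} shows $f$ is minimal, so every forward orbit is dense. At the same time Theorem \ref{thmInvShadNotEvSens} shows $(X,f)$ is not eventually sensitive.

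The crux is to read off what the failure of eventual sensitivity gives. Negating the defining quantifiers, for every $\delta>0$ there exist a \emph{single} point $x\in X$ and a radius $\epsilon_0>0$ such that \[ d\bigl(f^{n+k}(x),f^k(y)\bigr)<\delta \qquad\text{for all } n,k\in\omega \text{ and all } y\in B_{\epsilon_0}(f^n(x)). \] The point to emphasise is that this is a \emph{uniform} modulus of equicontinuity along the whole forward orbit of $x$: the same radius $\epsilon_0$ works at every iterate $f^n(x)$. Minimality then propagates this to all of $X$.

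Concretely, given $\epsilon>0$ I would apply the above with $\delta=\epsilon/3$ to obtain $x$ and $\epsilon_0$, and set $\eta=\epsilon_0/2$. For any $w\in X$ and any $b$ with $d(w,b)<\eta$, density of the forward orbit of $x$ yields $n\in\omega$ with $d(f^n(x),w)<\eta$; then both $w$ and $b$ lie in $B_{\epsilon_0}(f^n(x))$, so applying the displayed estimate with $y=w$ and with $y=b$ and using the triangle inequality gives $d(f^k(w),f^k(b))<2\delta<\epsilon$ for every $k\in\omega$. As $\eta$ does not depend on $w$, this is (uniform) equicontinuity. For the homeomorphism statement I would reduce $(X,f)$ to the positive case: prepending the true backward orbit to any positive $\delta$-method for $f$ produces a bi-infinite $\delta$-method (the backward junction is exact, so no uniform continuity is needed), whence full $\mathcal{T}_0$-inverse shadowing implies positive $\mathcal{T}_0$-inverse shadowing and equicontinuity of $(X,f)$ follows. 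For $(X,f^{-1})$, Theorem \ref{thmInvShadNotEvSens} already supplies that the inverse system is not eventually sensitive, and minimality is symmetric in $f$ and $f^{-1}$ since both maps have exactly the same closed invariant sets; running the same density argument for $f^{-1}$ then gives equicontinuity of $(X,f^{-1})$.

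The step I expect to be the main obstacle is the propagation itself, and in particular the density it requires. For $(X,f)$ this is exactly where chain transitivity enters, via Theorem \ref{thmInvShadTTEq}, so it is already secured. The delicate case is $(X,f^{-1})$: on a non-compact space the closure of a forward orbit and of a backward orbit need not agree, so one must be careful that minimality really delivers density of the forward orbit of the point furnished by the failure of eventual sensitivity for $f^{-1}$, rather than merely the absence of proper closed invariant sets. I would argue this density for $f^{-1}$ directly from minimality rather than trying to transport equicontinuity of $(X,f)$ to its inverse, since on non-compact spaces forward equicontinuity of $f$ carries no automatic information about $f^{-1}$.
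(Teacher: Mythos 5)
Your skeleton is the same as the paper's: feed Theorem~\ref{thmInvShadTTEq} (minimality) and Theorem~\ref{thmInvShadNotEvSens} (failure of eventual sensitivity) into an equicontinuity-versus-sensitivity dichotomy. The difference is that where the paper simply cites the Auslander--Yorke dichotomy (with a footnote that the non-compact version is an exercise), you prove the propagation step yourself. For the positive case your argument is correct and complete, and in fact yields \emph{uniform} equicontinuity, because the negation of eventual sensitivity hands you a single radius $\epsilon_0$ valid along the whole forward orbit of one point $x$, and minimality makes that orbit dense. Your reduction of full $\mathcal{T}_0$-inverse shadowing to positive $\mathcal{T}_0$-inverse shadowing by prepending the exact backward orbit is also correct (the junction term is $d(f(f^{-1}(x)),x)=0$), so the equicontinuity of $(X,f)$ in the homeomorphism case is secured.

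The genuine gap is exactly the step you flagged and then waved through: for $(X,f^{-1})$ your propagation needs the forward $f^{-1}$-orbit (i.e.\ the backward $f$-orbit) of the point furnished by non-eventual-sensitivity of $f^{-1}$ to be dense, and you assert this comes ``directly from minimality''. Without compactness it does not. What you actually have is density of \emph{forward} $f$-orbits (Theorem~\ref{thmInvShadTTEq} via your reduction) and of full orbits; neither implies density of backward orbits (the translation $n\mapsto n+1$ on $\mathbb{Z}$ has every full orbit dense and no backward orbit dense). The closed-invariant-set formulation is symmetric in $f$ and $f^{-1}$, but the closure $B$ of a backward orbit only satisfies $f^{-1}(B)\subseteq B$, not $f(B)=B$, so minimality says nothing about it; compactness is precisely what rescues this in the classical setting, via $\omega$-limit sets of $f^{-1}$. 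Nor can you apply Theorem~\ref{thmInvShadTTEq} to $f^{-1}$ itself: neither chain transitivity nor $\mathcal{T}_0$-inverse shadowing transfers from $f$ to $f^{-1}$ without uniform continuity, since pseudo-orbits of $f^{-1}$ are not reversed pseudo-orbits of $f$. The repair is to realise that backward-orbit density is not needed. Non-eventual-sensitivity of $f^{-1}$ with $n=0$ gives an open ball $U=B_{\epsilon_0}(x^*)$ such that $d(f^{-k}(u),f^{-k}(x^*))<\delta$ for every $u\in U$ and $k\in\omega$; given $p\in X$, density of the \emph{forward} $f$-orbit of $p$ yields $n\in\omega$ with $f^n(p)\in U$, and then $W=f^{-n}(U)$ is an open neighbourhood of $p$ satisfying $d(f^{-k}(w),f^{-k}(p))<2\delta$ for all $w\in W$ and $k\in\omega$, by applying the estimate to $u=f^n(w)$ and $u=f^n(p)$ at exponent $n+k$. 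This ``pull the stable set back through the homeomorphism'' step, using forward density and continuity of negative iterates, is also what makes the paper's appeal to the dichotomy legitimate for the inverse system, since full-orbit density (which is symmetric in $f$ and $f^{-1}$) suffices for the dichotomy when the map is invertible.
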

\begin{proof}
%By Theorem \ref{thmInvShadTTEq} $(X,f)$ is a transitive system. A transitive system is either uniformly equicontinuous or eventually sensitive (see \cite{GoodLeekMitchell}). The result now follows from Theorem \ref{thmInvShadNotEvSens}.
By Theorem \ref{thmInvShadTTEq} the system is minimal. A minimal system is either equicontinuous or sensitive (see\footnote{The authors of \cite{AuslanderYorke} prove that compact metric minimal systems are either uniformly equicontinuous or sensitive. Without the presence of compactness it is still an easy exercise to show that the system is either equicontinuous or sensitive.} \cite[Corollary 2] {AuslanderYorke}). 
\end{proof}

We conclude this paper by showing that, for compact systems, inverse shadowing with respect to any given class, is equivalent to what we call \textit{finite inverse shadowing}, with respect to the same class. This is akin to the result that shadowing is equivalent to finite shadowing in a compact system  \cite{Pilyugin}.

\begin{definition}
A system $(X,f)$ exhibits \textit{finite (resp. finite positive) $\mathcal{T}_\alpha$-inverse shadowing} if for any $\epsilon>0$ there exists $\delta>0$ such that for any $x \in X$ and any $\varphi \in \mathcal{T}_0(f,\delta)$ and any $n \in \mathbb{N}$ there exists $y_n \in X$ such that for any $k \in \{-n,\ldots, 0, \ldots, n\}$ (resp. $k \in \{0, \ldots, n\}$) $d(f^k(x),\varphi(y_n)_k)<\epsilon$.
\end{definition}

The proof of the lemma below is very similar to that of Theorem \ref{Reform} and is therefore omitted.

\begin{lemma}\label{ReformFinite}
A map $f\colon X \to X$ has finite (resp. finite positive) $\mathcal{T}_0$-inverse shadowing if and only if for any $\epsilon>0$ there exists $\delta>0$ such that for any $x \in X$ and any $n \in \mathbb{N}$ there exists $y_n \in X$ such that for any $\varphi \in \mathcal{T}_0(f,\delta)$ and any $k \in \{-n,\ldots, 0, \ldots, n\}$ (resp. $k \in \{0, \ldots, n\}$) $d(f^k(x),\varphi(y)_k)<\epsilon$.
\end{lemma}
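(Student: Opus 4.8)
The plan is to follow the proof of Theorem \ref{Reform} almost verbatim, the only novelty being the presence of the additional parameter $n \in \mathbb{N}$, which merely rides along. As there, one implication is immediate: if the reformulated condition holds, then for the given $\delta$, $x$ and $n$ the witness $y_n$ works for every $\varphi \in \mathcal{T}_0(f,\delta)$ simultaneously, so in particular it works for any single prescribed $\varphi$, which is exactly finite (resp. finite positive) $\mathcal{T}_0$-inverse shadowing. Thus I would only need to establish the converse.

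For the converse, suppose $f$ has finite (resp. finite positive) $\mathcal{T}_0$-inverse shadowing and, seeking a contradiction, suppose the reformulated statement fails. Negating it produces an $\epsilon > 0$ such that for every $\delta > 0$ there exist $x \in X$ and $n \in \mathbb{N}$ with the property that for every $y \in X$ one can find $\varphi_y \in \mathcal{T}_0(f,\delta)$ and $k \in \{-n, \ldots, n\}$ (resp. $k \in \{0, \ldots, n\}$) with $d(f^k(x), \varphi_y(y)_k) \geq \epsilon$. I would fix this $\epsilon$, let $\delta$ be the constant supplied by finite inverse shadowing, and then fix the corresponding bad pair $(x,n)$.

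The crux is the same diagonal construction as in Theorem \ref{Reform}: define $\varphi \colon X \to X^{\mathbb{Z}}$ (resp. $X^{\omega}$) by $\varphi(y) := \varphi_y(y)$. Because membership in $\mathcal{T}_0(f,\delta)$ imposes nothing on a method beyond the anchoring condition $\varphi(y)_0 = y$ and the requirement that each $\varphi(y)$ be a (resp. positive) $\delta$-pseudo-orbit, both of which are inherited from the individual $\varphi_y$, this stitched map is itself a legitimate element of $\mathcal{T}_0(f,\delta)$. Applying finite $\mathcal{T}_0$-inverse shadowing to this $x$, this $\varphi$ and this $n$ yields some $y_n \in X$ with $d(f^k(x), \varphi(y_n)_k) < \epsilon$ for all $k$ in the relevant window. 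But $\varphi(y_n) = \varphi_{y_n}(y_n)$ was engineered to violate this at some such $k$, which is the desired contradiction.

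I expect no genuine obstacle: the argument is structurally identical to Theorem \ref{Reform}, and the finite window $\{-n,\ldots,n\}$ (resp. $\{0,\ldots,n\}$) plays no role beyond bookkeeping, so one simply carries $n$ through unchanged. The single point that must be verified, and the reason the statement is peculiar to the class $\mathcal{T}_0$, is precisely that the diagonally stitched method is admissible; this step would break for $\mathcal{T}_c$ or $\mathcal{T}_h$, where gluing arbitrary pointwise choices of $\varphi_y$ need not preserve continuity or the inducing homeomorphism.
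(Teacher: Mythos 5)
Your proof is correct and is exactly the argument the paper intends: the paper omits the proof of Lemma \ref{ReformFinite} precisely because it is the same diagonal-method construction as Theorem \ref{Reform}, with the fixed window $\{-n,\ldots,n\}$ (resp. $\{0,\ldots,n\}$) carried along as you describe. Your closing observation that admissibility of the stitched method is special to $\mathcal{T}_0$ also matches the paper's framing of why the result is stated only for that class.
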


\begin{proposition}\label{Finite} Let $X$ be a compact metric space. For any $\alpha \in \{0,c,h\}$, a homeomorphism (resp. continuous map) $f \colon X \to X$ has finite (positive) $\mathcal{T}_\alpha$-inverse shadowing if and only if it has (positive) $\mathcal{T}_\alpha$-inverse shadowing.
\end{proposition}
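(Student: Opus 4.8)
The plan is to observe that one implication is immediate and to establish the converse by a sequential compactness argument, treating the continuous classes $\{c,h\}$ and the class $0$ separately. The direction ``(positive) $\mathcal{T}_\alpha$-inverse shadowing $\Rightarrow$ finite (positive) $\mathcal{T}_\alpha$-inverse shadowing'' is trivial: given $\epsilon$, keep the same $\delta$, and for any $x$, $\varphi$ and $n$ the point $y$ witnessing full shadowing serves as $y_n$, since $\epsilon$-shadowing on all of $\mathbb{Z}$ (resp. $\omega$) certainly entails it on the finite window. For the converse I would fix $\epsilon>0$, let $\delta>0$ be the constant supplied by finite inverse shadowing for $\tfrac{\epsilon}{2}$, fix $x\in X$, and for each $n\in\mathbb{N}$ extract the witness $y_n$; by compactness of $X$ I pass to a convergent subsequence $y_{n_j}\to y$, and I claim this single $y$ witnesses full inverse shadowing at $x$.

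For $\alpha\in\{c,h\}$ the argument is direct. Since full $\mathcal{T}_\alpha$-inverse shadowing permits $y$ to depend on $\varphi$, I fix an arbitrary $\varphi\in\mathcal{T}_\alpha(f,\delta)$ before extracting the $y_n$. Every such $\varphi$ is continuous as a map $X\to X^{\mathbb{Z}}$ (resp. $X\to X^\omega$) -- for $\mathcal{T}_h$ because $\varphi(\cdot)_k=h^k$ -- so $\varphi(y_{n_j})_k\to\varphi(y)_k$ coordinatewise. For each fixed $k$, taking $j$ with $n_j\ge|k|$ gives $d(f^k(x),\varphi(y_{n_j})_k)<\tfrac{\epsilon}{2}$, and letting $j\to\infty$ yields $d(f^k(x),\varphi(y)_k)\le\tfrac{\epsilon}{2}<\epsilon$. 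Thus $\varphi(y)$ $\epsilon$-shadows $x$, which is exactly full $\mathcal{T}_\alpha$-inverse shadowing.

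The case $\alpha=0$ is the main obstacle, since a $\delta$-method need not be continuous and the limit step above breaks down. Here I would instead invoke the reformulations: take $\delta$ from Lemma \ref{ReformFinite} applied to $\tfrac{\epsilon}{2}$, so that each $y_n$, and hence $y=\lim y_{n_j}$, is chosen \emph{independently of the method}. Fixing an arbitrary $\varphi\in\mathcal{T}_0(f,\delta)$ and an index $k$, I perform surgery on the pseudo-orbit $\varphi(y)$: define an auxiliary method $\psi$ which at the point $y_{n_j}$ returns the sequence obtained from $\varphi(y)$ by replacing its $0$th entry $y$ with $y_{n_j}$, and is defined arbitrarily (say by true orbits) elsewhere. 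Because $\varphi(y)$ is a $\delta$-pseudo-orbit, the inequalities $d(f(y),\varphi(y)_1)<\delta$ and $d(f(\varphi(y)_{-1}),y)<\delta$ are strict, so continuity of $f$ together with $y_{n_j}\to y$ guarantees that for all large $j$ the sequence $\psi(y_{n_j})$ is genuinely a $\delta$-pseudo-orbit, whence $\psi\in\mathcal{T}_0(f,\delta)$. Choosing $j$ also large enough that $n_j\ge|k|$ and applying the reformulated finite inverse shadowing of Lemma \ref{ReformFinite} to $\psi$ at $y_{n_j}$ gives $d(f^k(x),\psi(y_{n_j})_k)<\tfrac{\epsilon}{2}$; but $\psi(y_{n_j})_k=\varphi(y)_k$ for $k\ne 0$, so $d(f^k(x),\varphi(y)_k)<\tfrac{\epsilon}{2}<\epsilon$ (the case $k=0$ follows from $d(x,y_{n_j})<\tfrac{\epsilon}{2}$ in the limit). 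As $\varphi$ and $k$ are arbitrary and $y$ does not depend on them, this establishes the reformulated condition, which by Theorem \ref{Reform} is equivalent to $\mathcal{T}_0$-inverse shadowing. The decoupling of $y_n$ from the method provided by Lemma \ref{ReformFinite} is precisely what makes this surgery legitimate, and is the reason $\mathcal{T}_0$ must be handled differently from $\{c,h\}$.

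Finally, I would note that the positive versions (with $f$ merely continuous) are identical but strictly simpler, as only nonnegative indices occur and the negative-index condition on $\psi$ in the surgery above never arises.
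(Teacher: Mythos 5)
Your proposal is correct and takes essentially the same approach as the paper: the trivial forward implication, sequential compactness plus continuity of the method for $\mathcal{T}_c$ and $\mathcal{T}_h$, and, for $\mathcal{T}_0$, the method-independent witnesses $y_n$ from Lemma \ref{ReformFinite} combined with surgery at the $0$th coordinate of a pseudo-orbit through the limit point $y$. The only (immaterial) technical difference is that the paper absorbs the perturbation from swapping $y$ and $y_n$ by invoking finite inverse shadowing at parameter $2\delta$ while feeding it $\delta$-pseudo-orbits, whereas you exploit strictness of the defining inequalities and take $j$ large; on that point, just phrase the construction so that $\psi$ performs the surgery only at the single, sufficiently large $y_{n_j}$ you actually use (and returns true orbits elsewhere), since a $\psi$ that does surgery at every $y_{n_j}$ need not be a $\delta$-method.
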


\begin{proof}
We will prove the cases when $f$ is a homeomorphism, with reference to $\mathcal{T}_\alpha$-inverse shadowing. The cases when $f$ is a continuous map, with reference to positive $\mathcal{T}_\alpha$-inverse shadowing, are similar.

Clearly, for any $\alpha \in \{0,c,h\}$, if $f$ has it has (positive) $\mathcal{T}_\alpha$-inverse shadowing then it has finite (positive) $\mathcal{T}_\alpha$-inverse shadowing.

Suppose that $f \colon X \to X$ has finite $\mathcal{T}_c$-inverse shadowing. Let $\epsilon>0$ be given and let $\delta>0$ correspond to $\frac{\epsilon}{2}$ in finite inverse shadowing. Let $x\in X$ and take $\varphi \in \mathcal{T}_c(f,\delta)$. For any $n \in \mathbb{N}$, there exists $y_n\in X$ such that $d(\varphi(y_n)_i,f^i(x))<\frac{\epsilon}{2}$ for all $i \in \{-n \ldots, 0, \ldots, n\}$. By sequential compactness, $(y_n)$ has a convergent subsequence; call it $(y_n)$ again. Let $y=\lim_{n\to \infty} y_n$; $y \in \overline{B_{\frac{\epsilon}{2}}(x)}$. Since $\varphi \colon X \to X^\mathbb{Z}$ is continuous, 
\[\lim_{n\to \infty}\varphi(y_n)=\varphi\left(\lim_{n\to \infty}y_n\right)=\varphi(y).\]
Moreover it follows that, for any $k \in \mathbb{N}$ there exists $M \in \mathbb{N}$ such that for any $m>M$ we have $d(\varphi(y)_i, \varphi(y_m)_i)<\frac{\epsilon}{2}$ for all $i \in \{-k, \ldots, 0, \ldots, k\}$. For a fixed $k$ and corresponding $M$, we must also have, for all $m>M$ that, $d(\varphi(y_m)_i,f^i(x))<\frac{\epsilon}{2}$ for all $i \in \{-k, \ldots, 0, \ldots, k\}$. Then, by the triangle inequality,

\[\forall k \in \mathbb{N} \exists M \in \mathbb{N} : \forall m>M \, \forall i \in \{-k, \ldots, 0, \ldots, k\}, \, d(\varphi(y)_i, f^i(x))<\epsilon.\]
From this it follows that for every $k \in \mathbb{N}$ we have $d(\varphi(y)_k, f^k(x))<\epsilon$.
Hence $f \colon X \to X$ has $\mathcal{T}_c$-inverse shadowing. The proof for the class $\mathcal{T}_h$ is similar to the above (as are the positive versions for the classes $\mathcal{T}_c$ and $\mathcal{T}_h$); these are therefore omitted.

Now suppose that $f \colon X \to X$ has finite $\mathcal{T}_0$-inverse shadowing. Let $\epsilon>0$ be given and let $2\delta>0$ correspond to $\frac{\epsilon}{2}$ in finite inverse shadowing; without loss of generality taking $\delta < \frac{\epsilon}{4}$. Then, by Lemma \ref{Reform}, for any $n \in \mathbb{N}$ there exists $y_n \in X$ such that for any $\varphi \in \mathcal{T}_0(f,2\delta)$, and any $i \in \{-n, \ldots,0,\ldots, n\}$, $d(\varphi(y_n)_i, f^i(x))< \frac{\epsilon}{2}$. By sequential compactness, $(y_n)$ has a convergent subsequence; call it $(y_n)$ again. Let $y=\lim_{n\to \infty} y_n$; $y \in \overline{B_{\frac{\epsilon}{2}}(x)}$. Because $f$ and $f^{-1}$ are continuous, there exists $N \in \mathbb{N}$ such that $d(f(y_n),f(y))<\delta$ and $d(f^{-1}(y_n),f^{-1}(y))<\delta$ for all $n>N$. Then for any $n>N$ we have

\[B_\delta\left(f(y)\right)\subseteq B_{2\delta}\left(f(y_n)\right)\subseteq B_{\frac{\epsilon}{2}}\left(f(x)\right), \]
and
\[B_\delta\left(f^{-1}(y)\right)\subseteq B_{2\delta}\left(f^{-1}(y_n)\right)\subseteq B_{\frac{\epsilon}{2}}\left(f^{-1}(x)\right). \]

Now let $(z_k)_{k\in \mathbb{Z}}$ be a $\delta$-pseudo-orbit through $y$ (so $z_0=y$), we will show this $\epsilon$-shadows $x$. Fix $n>N$. Then \[(\ldots,...,z_{-k},\ldots,z_{-2}, z_{-1},y_n,z_1,z_2,\ldots,z_{k},\ldots),\] is a $2\delta$-pseudo-orbit through $y_n$; by finite shadowing $d(f^i(x), z_i) \leq \frac{\epsilon}{2}$ for all $i \in \{-n,\ldots, -1,1,\ldots, n\}$. Additionally, $d(x,z_0)\leq \frac{\epsilon}{2}<\epsilon$, as $z_0=y$. Since $n>N$ was arbitrary it follows that, for any $n>N$, $d(f^i(x), z_i) \leq \frac{\epsilon}{2}$ for all $i \in \{-n,\ldots, -1,1,\ldots, n\}$. This in turn entails that $d(f^i(x), z_i) \leq \frac{\epsilon}{2}$ for all $i \in \mathbb{Z}$. As this was an arbitrary $\delta$-pseudo-orbit through $y$ we are done.
\end{proof}

\bibliographystyle{plain} % We choose the "plain" reference style
\bibliography{bib}

\end{document}